\newtheorem{theorem}{Theorem}
\theoremstyle{plain}
\newtheorem{corollary}{Corollary}
\newtheorem{definition}{Definition}
\newtheorem{example}{Example}
\newtheorem{proposition}{Proposition}
\newtheorem{remark}{Remark}
\numberwithin{equation}{section}
\begin{document}
\title[On $(h-s)_{1,2}-$convex Functions and Hadamard-type Inequalities]{On $%
(h-s)_{1,2}-$convex Functions and Hadamard-type Inequalities}
\author{$^{\bigstar }$M. EM\.{I}N \"{O}ZDEM\.{I}R}
\address{$^{\bigstar }$Atat\"{u}rk University, K. K. Education Faculty,
Department of Mathematics, 25240, Campus, Erzurum, Turkey }
\email{emos@atauni.edu.tr}
\author{$^{\blacksquare }$Mevl\"{u}t TUN\c{C}}
\address{$^{\blacksquare }$Kilis 7 Aral\i k University, Faculty of Science
and Arts, Department of Mathematics, 79000, Kilis, Turkey}
\email{mevlutunc@kilis.edu.tr}
\author{$^{\spadesuit }$Ahmet Ocak AKDEM\.{I}R}
\address{$^{\spadesuit }$A\u{g}r\i\ \.{I}brahim \c{C}e\c{c}en University
Faculty of Science and Letters, Department of Mathematics, 04100, A\u{g}r\i
, Turkey}
\email{ahmetakdemir@agri.edu.tr}
\subjclass[2000]{26D15}
\keywords{$h-$convex, $s-$convex, Bullen's inequality. \ \ \ \ \ \ \ \ \ \ \
\ \ \ \ \ \ \ \ \ \ \ \ \ \ \ \ \ \ \ \ \ \ \ \ \ \ \ \ \ \ \ \ \ \ \ \ \ \
\ \ \ \ \ \ \ \ \ \ \ \ \ \ \ \ \ \ \ \ \ \ \ \ \ \ \ \ $^{\blacksquare
}:Corresponding$ $Author.$}

\begin{abstract}
In this paper, two new classes of convex functions as a generalization of
convexity which is called $(h-s)_{1,2}-$convex functions are given. We also
prove some \ Hadamard-type inequalities and applications to the special
means are given.
\end{abstract}

\maketitle

\section{Introduction}

The following definition is well known in the literature [\ref{mit2}]: A
function $f:I\rightarrow 
\mathbb{R}
,\emptyset \neq I\subseteq 
\mathbb{R}
,$ is said to be convex on $I$ if inequality

\begin{equation}
f\left( tx+\left( 1-t\right) y\right) \leq tf\left( x\right) +\left(
1-t\right) f\left( y\right)  \label{101}
\end{equation}%
holds for all $x,y\in I$ and $t\in \left[ 0,1\right] $. Geometrically, this
means that if $P,Q$ and $R$ are three distinct points on the graph of $f$
with $Q$ between $P$ and $R$, then $Q$ is on or below chord $PR$.

Let $f:I\subseteq 
\mathbb{R}
\rightarrow 
\mathbb{R}
$ be a convex function and $a,b\in I$ with $a<b$. The following double
inequality:$\ $

\begin{equation}
f\left( \frac{a+b}{2}\right) \leq \frac{1}{b-a}\int_{a}^{b}f\left( x\right)
dx\leq \frac{f\left( a\right) +f\left( b\right) }{2}  \label{102}
\end{equation}%
is known in the literature as Hadamard's inequality (or Hermite-Hadamard
inequality) for convex functions. Keep in mind that some of the classical
inequalities for means can come from (\ref{102}) for convenient particular
selections of the function $f.$ If $f$ is concave, this double inequality
hold in the inversed way.

\begin{remark}
\cite{SEL} Note that the first inequality stronger than the second
inequality in (\ref{102}); i.e., the following inequality is valid for a
convex function $f:$%
\begin{equation}
\frac{1}{b-a}\int_{a}^{b}f\left( x\right) dx-f\left( \frac{a+b}{2}\right)
\leq \frac{f\left( a\right) +f\left( b\right) }{2}-\frac{1}{b-a}%
\int_{a}^{b}f\left( x\right) dx.  \label{k}
\end{equation}%
Indeed (\ref{k}) can be written as 
\begin{equation}
\frac{2}{b-a}\int_{a}^{b}f\left( x\right) dx\leq \frac{1}{2}\left[ f\left(
a\right) +f\left( b\right) +2f\left( \frac{a+b}{2}\right) \right] ,
\label{z}
\end{equation}%
which is 
\begin{eqnarray*}
&&\frac{2}{b-a}\int_{a}^{\frac{a+b}{2}}f\left( x\right) dx+\frac{2}{b-a}%
\int_{\frac{a+b}{2}}^{b}f\left( x\right) dx \\
&\leq &\frac{1}{2}\left[ f\left( a\right) +f\left( \frac{a+b}{2}\right) %
\right] +\frac{1}{2}\left[ f\left( b\right) +f\left( \frac{a+b}{2}\right) %
\right] .
\end{eqnarray*}%
this immediately follows by applying the second inequality in (\ref{102})
twice (on the interval $\left[ a,\frac{a+b}{2}\right] $ and $\left[ \frac{a+b%
}{2},b\right] $). By letting $a=-1,$ $b=1,$ we obtain the result due to
Bullen (1978). Further on, we shall call (\ref{k}) as Bullen's inequality.
\end{remark}

The inequalities (\ref{102}) which have numerous uses in a variety of
settings, has been came a significant groundwork in mathematical analysis
and optimization. Many reports have provided new proof, extensions and
considering its refinements, generalizations, numerous interpolations and
applications, for example, in the theory of special means and information
theory. For some results on generalizations, extensions and applications of
the Hermite-Hadamard inequalities and convexity, see \cite{SSD1}-\cite{SEL}.

\begin{definition}
\bigskip \textit{[\ref{god}] We say that }$f:I\rightarrow 
\mathbb{R}
$\textit{\ is Godunova-Levin function or that }$f$\textit{\ belongs to the
class }$Q\left( I\right) $\textit{\ if }$f$\textit{\ is non-negative and for
all }$x,y\in I$\textit{\ and }$t\in \left( 0,1\right) $\textit{\ we have \ \
\ \ \ \ \ \ \ \ \ \ \ }%
\begin{equation}
f\left( tx+\left( 1-t\right) y\right) \leq \frac{f\left( x\right) }{t}+\frac{%
f\left( y\right) }{1-t}.  \label{103}
\end{equation}
\end{definition}

\begin{definition}
\textit{[\ref{dr1}] We say that }$f:I\subseteq 
\mathbb{R}
\rightarrow 
\mathbb{R}
$\textit{\ is a }$P-$\textit{function or that }$f$\textit{\ belongs to the
class }$P\left( I\right) $\textit{\ if }$f$\textit{\ is nonnegative and for
all }$x,y\in I$\textit{\ and }$t\in \left[ 0,1\right] ,$\textit{\ we have}%
\begin{equation}
f\left( tx+\left( 1-t\right) y\right) \leq f\left( x\right) +f\left(
y\right) .  \label{104}
\end{equation}
\end{definition}

\begin{definition}
\textit{[\ref{hud}] Let }$s\in \left( 0,1\right] .$\textit{\ A function }$%
f:\left( 0,\infty \right] \rightarrow \left[ 0,\infty \right] $\textit{\ is
said to be }$s-$\textit{convex in the second sense if \ \ \ \ \ \ \ \ \ \ \
\ }%
\begin{equation}
f\left( tx+\left( 1-t\right) y\right) \leq t^{s}f\left( x\right) +\left(
1-t\right) ^{s}f\left( y\right) ,  \label{105}
\end{equation}%
\textit{for all }$x,y\in \left( 0,b\right] $\textit{\ \ and }$t\in \left[ 0,1%
\right] $\textit{. This class of }$s-$convex\textit{\ functions is usually
denoted by }$K_{s}^{2}$\textit{.}
\end{definition}

In 1978, Breckner introduced $s-$convex functions as a generalization of
convex functions in [\textit{\ref{bre1}}]. Also, in that work Breckner
proved the important fact that the set valued map is $s-$convex only if the
associated support function is $s-$convex function in [\textit{\ref{bre2}}].
A number of properties and connections with s-convex in the first sense are
discussed in paper [\textit{\ref{hud}}]. Of course, $s-$convexity means just
convexity when $s=1$.

\begin{definition}
\bigskip \textit{[\ref{var}] Let }$h:J\subseteq 
\mathbb{R}
\rightarrow 
\mathbb{R}
$\textit{\ be a positive function . We say that }$f:I\subseteq 
\mathbb{R}
\rightarrow 
\mathbb{R}
$\textit{\ is }$h-$\textit{convex function, or that }$f$\textit{\ belongs to
the class }$SX\left( h,I\right) $\textit{, if }$f$\textit{\ is nonnegative
and for all }$x,y\in I$\textit{\ and }$t\in \left[ 0,1\right] $\textit{\ we
have \ \ \ \ \ \ \ \ \ \ \ \ }%
\begin{equation}
f\left( tx+\left( 1-t\right) y\right) \leq h\left( t\right) f\left( x\right)
+h\left( 1-t\right) f\left( y\right) .  \label{106}
\end{equation}
\end{definition}

If inequality (\ref{106}) is reversed, then $f$ is said to be $h-$concave,
i.e. $f\in SV\left( h,I\right) $. Obviously, if $h\left( t\right) =t$, then
all nonnegative convex functions belong to $SX\left( h,I\right) $\ and all
nonnegative concave functions belong to $SV\left( h,I\right) $; if $h\left(
t\right) =\frac{1}{t}$, then $SX\left( h,I\right) =Q\left( I\right) $; if $%
h\left( t\right) =1$, then $SX\left( h,I\right) \supseteq P\left( I\right) $%
; and if $h\left( t\right) =t^{s}$, where $s\in \left( 0,1\right) $, then $%
SX\left( h,I\right) \supseteq K_{s}^{2}$.

In [\textit{\ref{dr1}}], Dragomir \textit{et al.} proved two inequalities of
Hadamard-type for $P-$functions.

\begin{theorem}
\lbrack \textit{\ref{dr1}] Let }$f\in P\left( I\right) $\textit{, }$a,b\in I$%
\textit{, with }$a<b$\textit{\ and }$f\in L_{1}\left( \left[ a,b\right]
\right) $\textit{. Then} \ \ \ \ \ \ \ \ \ \ \ 
\begin{equation}
\ f\left( \frac{a+b}{2}\right) \leq \frac{2}{b-a}\int_{a}^{b}f\left(
x\right) dx\leq 2\left[ f\left( a\right) +f\left( b\right) \right] .
\label{107}
\end{equation}
\end{theorem}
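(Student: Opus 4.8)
The plan is to derive both inequalities by integrating the defining inequality (\ref{104}) over $t\in[0,1]$ after suitable linear substitutions, paralleling the classical derivation of (\ref{102}) but using the weaker $P$-function bound in place of the convexity estimate.

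For the right-hand inequality, I would begin from the definition applied at the endpoints $x=a$, $y=b$: for every $t\in[0,1]$,
\[
f(ta+(1-t)b)\leq f(a)+f(b).
\]
Integrating in $t$ over $[0,1]$ and using the substitution $u=ta+(1-t)b$, under which $\int_0^1 f(ta+(1-t)b)\,dt=\frac{1}{b-a}\int_a^b f(u)\,du$, yields $\frac{1}{b-a}\int_a^b f(u)\,du\leq f(a)+f(b)$; multiplying by $2$ gives the claimed upper bound.

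For the left-hand inequality, the key observation is the symmetric decomposition of the midpoint,
\[
\frac{a+b}{2}=\frac12\bigl[ta+(1-t)b\bigr]+\frac12\bigl[(1-t)a+tb\bigr],
\]
valid for each $t\in[0,1]$. Applying (\ref{104}) with these two arguments gives
\[
f\!\left(\frac{a+b}{2}\right)\leq f(ta+(1-t)b)+f((1-t)a+tb).
\]
Integrating over $t\in[0,1]$, the left side stays constant while each term on the right reduces, via the same substitution, to $\frac{1}{b-a}\int_a^b f(u)\,du$; the two therefore combine to $\frac{2}{b-a}\int_a^b f(u)\,du$, which is exactly the desired lower bound.

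I do not anticipate a genuine obstacle here: the $L_1$ hypothesis merely guarantees that the integrals are finite, and nonnegativity of $f$ is not actually needed for either step. The only point calling for slight care is the appearance of the constant $2$ (rather than the classical $\frac12$) in both bounds; this is precisely where the $P$-function estimate is weaker than convexity, since each endpoint value $f(a)$ and $f(b)$ enters with full weight rather than with weight $t$ or $1-t$.
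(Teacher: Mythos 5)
Your proof is correct. The paper does not actually prove this statement---it is quoted as background from the reference [\ref{dr1}]---so there is no internal proof to compare against; your argument (integrating $f(ta+(1-t)b)\leq f(a)+f(b)$ over $t\in[0,1]$ for the upper bound, and using the symmetric midpoint splitting $\frac{a+b}{2}=\frac{1}{2}\left[ta+(1-t)b\right]+\frac{1}{2}\left[(1-t)a+tb\right]$ for the lower bound) is the standard one, and it is exactly the integration-and-substitution technique the paper itself uses for its own results, for instance in the proof of Theorem \ref{th2}, where the same two symmetric convex combinations are fed into the midpoint inequality before integrating.
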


In [\ref{pachpatte}], Pachpatte established the new following Hadamard-type
inequality for products of convex functions.

\begin{theorem}
\lbrack \ref{pachpatte}] \textit{Let }$f,g:\left[ a,b\right] \rightarrow %
\left[ 0,\infty \right) $\textit{\ \ be convex functions on }$\left[ a,b%
\right] \subset 
\mathbb{R}
$\textit{, }$a<b$\textit{. Then } \ \ \ \ \ \ \ \ \ \ \ 
\begin{equation}
\frac{1}{b-a}\int_{a}^{b}f\left( x\right) g\left( x\right) dx\leq \frac{1}{3}%
M\left( a,b\right) +\frac{1}{6}N\left( a,b\right)  \label{108}
\end{equation}%
\textit{where }$M\left( a,b\right) =f\left( a\right) g\left( a\right)
+f\left( b\right) g\left( b\right) $\textit{\ and }$N\left( a,b\right)
=f\left( a\right) g\left( b\right) +f\left( b\right) g\left( a\right) $%
\textit{.}\bigskip
\end{theorem}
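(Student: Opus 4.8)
The plan is to parametrize the interval $[a,b]$ by $x = ta+(1-t)b$ with $t\in[0,1]$, reduce the averaged integral to an integral over $[0,1]$, exploit convexity of $f$ and $g$ pointwise, and finally integrate the resulting quadratic polynomial in $t$ term by term. First I would record the substitution: with $x=ta+(1-t)b$ we have $dx=-(b-a)\,dt$, and the limits $t=1,0$ correspond to $x=a,b$, so that
\begin{equation*}
\frac{1}{b-a}\int_{a}^{b}f\left( x\right) g\left( x\right) dx=\int_{0}^{1}f\left( ta+(1-t)b\right) g\left( ta+(1-t)b\right) dt.
\end{equation*}

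Next, applying the convexity inequality (\ref{101}) separately to $f$ and to $g$ at the point $ta+(1-t)b$ (taking $x=a$, $y=b$), I obtain
\begin{equation*}
f\left( ta+(1-t)b\right) \leq tf(a)+(1-t)f(b),\qquad g\left( ta+(1-t)b\right) \leq tg(a)+(1-t)g(b).
\end{equation*}
Here the hypothesis that $f$ and $g$ are nonnegative is essential: since the two right-hand sides are also nonnegative, I may multiply these inequalities to bound the product from above by
\begin{equation*}
\left[ tf(a)+(1-t)f(b)\right] \left[ tg(a)+(1-t)g(b)\right] =t^{2}f(a)g(a)+(1-t)^{2}f(b)g(b)+t(1-t)N(a,b),
\end{equation*}
where the two cross terms assemble exactly into $t(1-t)N(a,b)$ and the squared terms produce $M(a,b)$ after integration.

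Finally I would integrate this upper bound over $t\in[0,1]$ using the elementary moments $\int_{0}^{1}t^{2}\,dt=\int_{0}^{1}(1-t)^{2}\,dt=\tfrac{1}{3}$ and $\int_{0}^{1}t(1-t)\,dt=\tfrac{1}{6}$, which yields precisely $\tfrac{1}{3}\left[ f(a)g(a)+f(b)g(b)\right] +\tfrac{1}{6}N(a,b)=\tfrac{1}{3}M(a,b)+\tfrac{1}{6}N(a,b)$. There is no genuine analytic obstacle in this argument; the one step demanding care is the multiplication of the two convexity estimates, where the nonnegativity of $f$ and $g$ must be invoked to preserve the direction of the inequality when passing to the product $f\cdot g$. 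Everything else is a routine termwise integration of a degree-two polynomial in $t$.
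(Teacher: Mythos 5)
Your proposal is correct, and it is essentially the same argument the paper itself uses: the paper states (\ref{108}) as Pachpatte's result and then recovers it as the special case $h(t)=t$, $s=1$ of Theorem \ref{th3}, whose proof proceeds exactly as yours — apply convexity to $f$ and $g$ at $ta+(1-t)b$, multiply the two estimates (invoking nonnegativity to justify the multiplication), expand, and integrate over $t\in\left[0,1\right]$. The only cosmetic difference is that the paper works with general $h^{s}$ and identifies the resulting moments as Beta-function values, whereas you compute $\int_{0}^{1}t^{2}\,dt=\tfrac{1}{3}$ and $\int_{0}^{1}t(1-t)\,dt=\tfrac{1}{6}$ directly.
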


In [\ref{ssd6}], Dragomir and Fitzpatrick proved a new variety of Hadamard's
inequality which holds for $s-$convex functions in the second sense.\bigskip

\begin{theorem}
\lbrack \ref{ssd6}] \textit{Suppose that }$f:\left[ 0,\infty \right)
\rightarrow \left[ 0,\infty \right) $\textit{\ is an }$s-$\textit{convex
function in the second sense, where }$s\in \left( 0,1\right) $\textit{, and
let }$a,b\in \left[ 0,\infty \right) ,a<b.$\textit{\ If }$f\in L_{1}\left( %
\left[ a,b\right] \right) $\textit{, then the following inequalities hold:}%
\begin{equation}
2^{s-1}f\left( \frac{a+b}{2}\right) \leq \frac{1}{b-a}\int_{a}^{b}f\left(
x\right) dx\leq \frac{f\left( a\right) +f\left( b\right) }{s+1}.  \label{109}
\end{equation}
\end{theorem}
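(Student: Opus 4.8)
The plan is to prove the two estimates separately, each by invoking the defining inequality (\ref{105}) for $s$-convexity and then integrating. For the right-hand inequality I would first normalize the integral by the substitution $x=tb+(1-t)a$, which gives $dx=(b-a)\,dt$ and turns the mean value into $\frac{1}{b-a}\int_a^b f(x)\,dx=\int_0^1 f(tb+(1-t)a)\,dt$. Applying (\ref{105}) pointwise yields $f(tb+(1-t)a)\le t^s f(b)+(1-t)^s f(a)$, and integrating both sides over $t\in[0,1]$ together with the elementary evaluation $\int_0^1 t^s\,dt=\int_0^1 (1-t)^s\,dt=\frac{1}{s+1}$ produces the announced upper bound $\frac{f(a)+f(b)}{s+1}$.

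For the left-hand inequality I would use the midpoint case $t=\frac12$ of (\ref{105}) combined with a symmetrization. For each fixed $x\in[a,b]$ the reflected point $a+b-x$ also lies in $[a,b]$, and since $\frac{a+b}{2}=\frac12 x+\frac12(a+b-x)$, inequality (\ref{105}) gives $f\left(\frac{a+b}{2}\right)\le \left(\frac12\right)^s\big[f(x)+f(a+b-x)\big]$, that is $2^s f\left(\frac{a+b}{2}\right)\le f(x)+f(a+b-x)$. Integrating this over $x\in[a,b]$ and noting that the change of variable $u=a+b-x$ shows $\int_a^b f(a+b-x)\,dx=\int_a^b f(x)\,dx$, I obtain $2^s(b-a)\,f\left(\frac{a+b}{2}\right)\le 2\int_a^b f(x)\,dx$, which rearranges to the asserted lower bound after dividing by $2(b-a)$.

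The only mild subtlety, and the step I would flag as most worth double-checking, is the symmetrization in the left inequality: one must verify that $a+b-x\in[a,b]$ so that (\ref{105}) is legitimately applicable, and that the reflected integral reproduces the original integral so the factor $2$ appears correctly. The constant $2^{s-1}$ (rather than the classical $1$) is precisely an artifact of the factor $\left(\frac12\right)^s$ in the $s$-convex midpoint estimate; everything else is a routine transcription of the convex proof of (\ref{102}), with the weight $t$ replaced by $t^s$. The integrability hypothesis $f\in L_1([a,b])$ guarantees that all the integrals above are finite and that the manipulations are valid.
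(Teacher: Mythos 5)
Your proof is correct and is essentially the paper's own argument: the paper proves this statement through its generalization, Theorem \ref{th2} for $(h-s)_{2}$-convex functions, whose proof pairs $ta+(1-t)b$ with its reflection $tb+(1-t)a$ in the midpoint estimate and integrates over $t$ for the lower bound, integrates the pointwise chord estimate for the upper bound, and then recovers (\ref{109}) by setting $h(t)=t$. Your symmetrization $x\mapsto a+b-x$ is exactly that same pairing written in the variable $x$ rather than $t$, so the two proofs coincide up to a change of variables.
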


Up until now, there are many reports on convexity and Hadamard-type
inequalities. The main purpose of the present paper is to give new classes
of convex functions which called $\left( h-s\right) _{1,2}-$convex functions
as a generalization of ordinary convex functions and to prove new
Hadamard-type inequalities for these new classes of functions. Some
applications to the special meansof real numbers are given. Throughout this
paper we will imply $M(a,b)=f(a)g(a)+f(b)g(b)$ and $%
N(a,b)=f(a)g(b)+f(b)g(a). $\bigskip

\section{New Definitions and Results}

\begin{definition}
Let $h:J\subset 
\mathbb{R}
\rightarrow 
\mathbb{R}
$ \ be a non-negative function, $h\neq 0.$We say that $f:%
\mathbb{R}
^{+}\cup \left\{ 0\right\} \rightarrow 
\mathbb{R}
$ is an $\left( h-s\right) _{1}-$convex function in the first sense, or that 
$f$ belong to the class $SX(\left( h-s\right) _{1},I),$ if $f$ is
non-negative and for all $x,y\in \left[ 0,\infty \right) =I,$ $s\in \left(
0,1\right] ,$ $t\in \left[ 0,1\right] $ we have%
\begin{equation}
f(tx+(1-t)y)\leq h^{s}(t)f(x)+(1-h^{s}(t))f(y).  \label{21}
\end{equation}%
\bigskip If inequality (\ref{21}) is reversed, then $f$ is said to be $%
\left( h-s\right) _{1}-$concav function in the first sense, i.e., $f\in
SV(\left( h-s\right) _{1},I).$\bigskip
\end{definition}

\begin{definition}
Let $\ \ \ \ h:J\subset 
\mathbb{R}
\rightarrow 
\mathbb{R}
$ be a non-negative function,\ $h\neq 0.$ We say that $f:%
\mathbb{R}
^{+}\cup \left\{ 0\right\} \rightarrow 
\mathbb{R}
$ \ \ is an \ $(h-s)_{2}-$convex function in the second sense, or that $f$
belong to the class $SX(\left( h-s\right) _{2},I)$ , if $f$ is non-negative
and for all $u,v\in \left[ 0,\infty \right) =I,$ $s\in \left( 0,1\right] ,$ $%
t\in \left[ 0,1\right] $ we have%
\begin{equation}
f(tu+(1-t)v)\leq h^{s}(t)f(u)+h^{s}(1-t)f(v).  \label{22}
\end{equation}%
If inequality (\ref{22}) is reversed, then $f$ is said to be $(h-s)_{2}-$%
concav function in the second sense, i.e., $f\in SV(\left( h-s\right)
_{2},I).$\bigskip
\end{definition}

Obviously, in (\ref{22}), if $h(t)=t$, then all $s-$convex functions in the
second sense belongs to $SX(\left( h-s\right) _{2},I)$ and all $s-$concav
functions in the second sense belongs to $SV(\left( h-s\right) _{2},I)$, and
it can be easily seen that for $h(t)=t,$ $s=1,$ $(h-s)_{2}-$convexity
reduces to ordinary convexity defined on $\left[ 0,\infty \right) .$
Similarly, in (\ref{21}), if $h(t)=t$, then all $s-$convex functions in the
first sense belongs to $SX(\left( h-s\right) _{1},I)$ and all $s-$concav
functions in the first sense belongs to $SV(\left( h-s\right) _{1},I)$, and
it can be easily seen that for $h(t)=t$ $,s=1,$ $(h-s)_{1}-$convexity
reduces to ordinary convexity defined on $\left[ 0,\infty \right) .$

\begin{example}
Let $h(t)=t$ be a function and let the function $f$ be defined as following;%
\begin{equation*}
f:[2,4]\rightarrow 
\mathbb{R}
^{+},\text{ \ \ \ \ }f(x)=\ln x.
\end{equation*}%
Then $f$ is non-convex and non-$h-$convex function, but it is $(h-s)_{2}-$%
convex function.
\end{example}

The following theorem was obtained by using the $(h-s)_{2}-$convex function
in the second sense.

\begin{theorem}
\label{th1} Let $h:J\subset 
\mathbb{R}
\rightarrow 
\mathbb{R}
$ be a non-negative function, $h\neq 0.$ We say that $f:I=\left[ 0,\infty
\right) \rightarrow 
\mathbb{R}
$ is an $(h-s)_{2}-$convex function in the second sense, or that $f$ belong
to the class $SX(\left( h-s\right) _{2},I),$ if $f$ is non-negative and for
all $x,y\in \left[ 0,\infty \right) =I,$ $s\in \left( 0,1\right] ,$ $t\in %
\left[ 0,1\right] .$ If $f\in L_{1}\left[ a,b\right] ,$ $h\in L_{1}\left[ 0,1%
\right] $, we have the following inequality:%
\begin{equation}
\frac{1}{b-a}\int_{a}^{b}f(x)dx\leq f(a)\int_{0}^{1}h^{s}\left( t\right)
dt+f(b)\int_{0}^{1}h^{s}\left( 1-t\right) dt.  \label{23}
\end{equation}

\begin{proof}
By the definition of $(h-s)_{2}-$convex mappings in the second sense, for
any $s\in \left( 0,1\right] $ and $t\in \left[ 0,1\right] ,$ we obtain the
following inequality for $u=a,$ $y=b$%
\begin{equation}
f(ta+(1-t)b)\leq h^{s}(t)f(a)+h^{s}(1-t)f(b).  \label{24}
\end{equation}%
Integrating both side of (\ref{24}) with respect to $t$ on $\left[ 0,1\right]
$, we have 
\begin{equation*}
\int_{0}^{1}f(ta+(1-t)b)dt\leq f(a)\int_{0}^{1}h^{s}\left( t\right)
dt+f(b)\int_{0}^{1}h^{s}\left( 1-t\right) dt.
\end{equation*}%
Use of the changing variable $ta+(1-t)b=x,$ $(b-a)dt=dx$, we have%
\begin{equation*}
\frac{1}{b-a}\int_{a}^{b}f(x)dx\leq f(a)\int_{0}^{1}h^{s}\left( t\right)
dt+f(b)\int_{0}^{1}h^{s}\left( 1-t\right) dt
\end{equation*}%
which is the inequality in (\ref{23}).
\end{proof}
\end{theorem}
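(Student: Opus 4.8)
The plan is to reduce the entire statement to the defining inequality (\ref{22}) evaluated at the two endpoints, and then integrate in the auxiliary parameter. First I would specialize the definition of $(h-s)_{2}$-convexity by taking $u=a$ and $v=b$, which produces the pointwise bound
\[
f(ta+(1-t)b)\leq h^{s}(t)f(a)+h^{s}(1-t)f(b)
\]
valid for every $t\in\left[0,1\right]$. This single line is the only place where the convexity hypothesis on $f$ is used; everything that follows is integration bookkeeping.

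Next I would integrate both sides with respect to $t$ over $\left[0,1\right]$. Before doing so I would note that the right-hand side is integrable: since $s\in\left(0,1\right]$ and $h\geq 0$, we have $h^{s}(t)\leq 1+h(t)$, so $h\in L_{1}\left[0,1\right]$ forces $\int_{0}^{1}h^{s}(t)\,dt$ and $\int_{0}^{1}h^{s}(1-t)\,dt$ to be finite, while $f(a)$ and $f(b)$ are constants. Monotonicity of the integral then gives
\[
\int_{0}^{1}f(ta+(1-t)b)\,dt\leq f(a)\int_{0}^{1}h^{s}(t)\,dt+f(b)\int_{0}^{1}h^{s}(1-t)\,dt.
\]

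The final step is to recognize the left-hand integral as the average of $f$ over $\left[a,b\right]$. Using the affine substitution $x=ta+(1-t)b$, one has $dx=(a-b)\,dt$, and the endpoints $t=0,1$ correspond to $x=b,a$; the two sign reversals (the factor $a-b$ and the reversed limits of integration) cancel, yielding $\frac{1}{b-a}\int_{a}^{b}f(x)\,dx$. Combining this with the preceding display delivers exactly (\ref{23}).

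I do not anticipate a genuine obstacle, as the argument is a direct integration of the definition. The only point deserving a word of care is the legitimacy of the substitution on the left-hand side, namely that $f\in L_{1}\left[a,b\right]$ guarantees that $t\mapsto f(ta+(1-t)b)$ is integrable on $\left[0,1\right]$; this is precisely the same affine change of variables run in reverse, so the integrability transfers without difficulty.
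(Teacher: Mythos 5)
Your proposal is correct and coincides with the paper's own proof: both specialize the defining inequality (\ref{22}) at $u=a$, $v=b$, integrate over $t\in[0,1]$, and convert the left-hand integral to $\frac{1}{b-a}\int_{a}^{b}f(x)\,dx$ by the affine substitution $x=ta+(1-t)b$. Your added remarks on the integrability of $h^{s}$ (via $h^{s}(t)\leq 1+h(t)$) and on the sign cancellation in the change of variables are sound refinements of details the paper leaves implicit.
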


\begin{corollary}
In the inequality (\ref{23}); \textit{if we choose }$s=1$\textit{, we have
the inequality;}%
\begin{equation*}
\frac{1}{b-a}\int_{a}^{b}f(x)dx\leq \left[ f(a)+f(b)\right]
\int_{0}^{1}h\left( t\right) dt
\end{equation*}%
\textit{\ }
\end{corollary}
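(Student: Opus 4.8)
The plan is to derive this directly from inequality (\ref{23}) in Theorem \ref{th1}, which already holds for every $s\in(0,1]$. First I would specialize that inequality to the case $s=1$. Since $h^{s}(t)=h^{1}(t)=h(t)$, the right-hand side of (\ref{23}) collapses to
\[
f(a)\int_{0}^{1}h\left( t\right) dt+f(b)\int_{0}^{1}h\left( 1-t\right) dt,
\]
so the entire task reduces to recognizing that the two integral factors coincide and then factoring out the common value.

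The key step is the observation that $\int_{0}^{1}h\left( 1-t\right) dt=\int_{0}^{1}h\left( t\right) dt$. I would establish this by the elementary substitution $u=1-t$, $du=-dt$, which maps the interval $[0,1]$ onto itself and reverses the orientation; the sign from $du=-dt$ is absorbed by swapping the limits of integration, leaving $\int_{0}^{1}h(u)\,du$. This is legitimate because $h\in L_{1}\left[ 0,1\right]$ is assumed in the hypotheses of Theorem \ref{th1}, so both integrals are finite and the change of variable is valid.

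Once the equality of the two integrals is in hand, substituting back gives
\[
f(a)\int_{0}^{1}h\left( t\right) dt+f(b)\int_{0}^{1}h\left( t\right) dt
=\left[ f(a)+f(b)\right] \int_{0}^{1}h\left( t\right) dt,
\]
which is exactly the claimed bound. Combining this with (\ref{23}) at $s=1$ yields the stated inequality. I do not anticipate any genuine obstacle here: the result is a direct corollary, and the only nontrivial ingredient is the symmetry of the integral under $t\mapsto 1-t$, which is routine.
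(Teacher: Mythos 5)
Your proof is correct and follows essentially the same route the paper intends: specialize (\ref{23}) to $s=1$ and use the substitution $u=1-t$ to identify $\int_{0}^{1}h(1-t)\,dt$ with $\int_{0}^{1}h(t)\,dt$, a symmetry the paper itself invokes explicitly in its later proofs (e.g.\ Theorem \ref{th4}). No gaps; the argument is complete.
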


\begin{remark}
\textit{If we choose }$h(t)=t$\textit{, we have the inequality;}%
\begin{eqnarray*}
\frac{1}{b-a}\int_{a}^{b}f(x)dx &\leq
&f(a)\int_{0}^{1}t^{s}dt+f(b)\int_{0}^{1}\left( 1-t\right) ^{s}dt \\
&=&\frac{f(a)+f(b)}{s+1}
\end{eqnarray*}%
which is the right hand side of the inequality in (\ref{109}).
\end{remark}

\begin{remark}
In the inequality in (\ref{23}); If we choose, $h(t)=t$ and $s=1$, we have%
\begin{equation*}
\frac{1}{b-a}\int_{a}^{b}f(x)dx\leq
f(a)\int_{0}^{1}tdt+f(b)\int_{0}^{1}\left( 1-t\right) dt=\frac{f(a)+f(b)}{2}
\end{equation*}%
which is the right hand side of the Hermite-Hadamard inequality in (\ref{102}%
).
\end{remark}

\begin{theorem}
\label{th2} Let $h:J\subset 
\mathbb{R}
\rightarrow 
\mathbb{R}
$ be a non-negative function, $h\neq 0.$ We say that $f:I=\left[ 0,\infty
\right) \rightarrow 
\mathbb{R}
$ is an $(h-s)_{2}-$convex function in the second sense, or that $f$ belong
to the class $SX(\left( h-s\right) _{2},I),$ if $f$ is non-negative and for
all $x,y\in \left[ 0,\infty \right) =I,$ $s\in \left( 0,1\right] ,$ $t\in %
\left[ 0,1\right] .$ If $f\in L_{1}\left[ a,b\right] ,$ $h\in L_{1}\left[ 0,1%
\right] $, we have the following inequality:%
\begin{equation}
\frac{1}{2h^{s}\left( \frac{1}{2}\right) }f\left( \frac{a+b}{2}\right) \leq 
\frac{1}{b-a}\int_{a}^{b}f(x)dx\leq \frac{f(a)+f(b)}{2}\int_{0}^{1}\left[
h^{s}\left( t\right) +h^{s}\left( 1-t\right) \right] dt.  \label{25}
\end{equation}
\end{theorem}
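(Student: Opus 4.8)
The plan is to establish the two inequalities in (\ref{25}) separately, in each case applying the defining inequality (\ref{22}) to suitably chosen arguments and then integrating over $t\in\left[0,1\right]$. The single idea driving both halves is a symmetrization: evaluate $f$ at the two points $ta+(1-t)b$ and $(1-t)a+tb$, which are reflections of each other about the midpoint $\frac{a+b}{2}$, and use that each of the integrals $\int_{0}^{1}f(ta+(1-t)b)\,dt$ and $\int_{0}^{1}f((1-t)a+tb)\,dt$ reduces, via the same change of variable employed in Theorem \ref{th1}, to $\frac{1}{b-a}\int_{a}^{b}f(x)\,dx$.

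For the right-hand inequality I would apply (\ref{22}) twice. With $x=ta+(1-t)b$ it gives $f(ta+(1-t)b)\leq h^{s}(t)f(a)+h^{s}(1-t)f(b)$, while with $x=(1-t)a+tb$ it gives $f((1-t)a+tb)\leq h^{s}(1-t)f(a)+h^{s}(t)f(b)$. Adding these and regrouping, the right-hand side becomes $\left[h^{s}(t)+h^{s}(1-t)\right]\left[f(a)+f(b)\right]$. Integrating over $\left[0,1\right]$ and replacing each of the two integrals on the left by $\frac{1}{b-a}\int_{a}^{b}f(x)\,dx$, I obtain $\frac{2}{b-a}\int_{a}^{b}f(x)\,dx\leq\left[f(a)+f(b)\right]\int_{0}^{1}\left[h^{s}(t)+h^{s}(1-t)\right]\,dt$; dividing by $2$ gives exactly the right inequality in (\ref{25}).

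For the left-hand inequality I would instead use (\ref{22}) with $t=\frac{1}{2}$ on the same pair of reflected points. Since $\frac{(ta+(1-t)b)+((1-t)a+tb)}{2}=\frac{a+b}{2}$, this yields $f\left(\frac{a+b}{2}\right)\leq h^{s}\left(\frac{1}{2}\right)\left[f(ta+(1-t)b)+f((1-t)a+tb)\right]$. Integrating in $t$ over $\left[0,1\right]$ and again substituting $\frac{1}{b-a}\int_{a}^{b}f(x)\,dx$ for each integral, I reach $f\left(\frac{a+b}{2}\right)\leq h^{s}\left(\frac{1}{2}\right)\cdot\frac{2}{b-a}\int_{a}^{b}f(x)\,dx$, and dividing through by $2h^{s}\left(\frac{1}{2}\right)$ produces the left inequality.

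The computations are routine, so the heart of the write-up is simply exhibiting the two symmetric substitutions. The only delicate point I would flag is the final division by $2h^{s}\left(\frac{1}{2}\right)$: this is legitimate only when $h\left(\frac{1}{2}\right)>0$, a positivity that is implicit in the very appearance of the factor $\frac{1}{2h^{s}(\frac{1}{2})}$ but is not listed among the explicit hypotheses, and I would record it rather than leave it silent.
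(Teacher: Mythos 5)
Your proposal is correct and follows essentially the same route as the paper: the same symmetrization via the reflected points $ta+(1-t)b$ and $tb+(1-t)a$, the midpoint application of (\ref{22}) with $t=\tfrac12$ for the left inequality, and addition of the two convexity bounds followed by integration and the change of variable for the right inequality (the paper merely carries the factor $h^{s}\left(\tfrac12\right)$ through the right-hand estimate before cancelling it, which changes nothing). Your remark that the division by $2h^{s}\left(\tfrac12\right)$ silently requires $h\left(\tfrac12\right)>0$ is a legitimate point that the paper itself leaves unaddressed.
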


\begin{proof}
\bigskip By the $(h-s)_{2}-$convexity of $f,$ we have that%
\begin{equation*}
f\left( \frac{x+y}{2}\right) \leq h^{s}\left( \frac{1}{2}\right) f\left(
x\right) +h^{s}\left( \frac{1}{2}\right) f\left( y\right) .
\end{equation*}%
If we choose $x=ta+(1-t)b,$ $y=tb+(1-t)a$, we get%
\begin{equation}
f\left( \frac{a+b}{2}\right) \leq h^{s}\left( \frac{1}{2}\right) f\left(
ta+(1-t)b\right) +h^{s}\left( \frac{1}{2}\right) f\left( tb+(1-t)a\right)
\label{26}
\end{equation}%
for all $t\in \left[ 0,1\right] $. Then, integrating both side of (\ref{26})
with respect to $t$ on $\left[ 0,1\right] ,$ we have%
\begin{equation*}
f\left( \frac{a+b}{2}\right) \leq \int_{0}^{1}\left( h^{s}\left( \frac{1}{2}%
\right) f\left( ta+(1-t)b\right) +h^{s}\left( \frac{1}{2}\right) f\left(
tb+(1-t)a\right) \right) dt.
\end{equation*}%
Use of the changing of variable, we have%
\begin{equation}
\frac{1}{2h^{s}\left( \frac{1}{2}\right) }f\left( \frac{a+b}{2}\right) \leq 
\frac{1}{b-a}\int_{a}^{b}f(x)dx,  \label{a}
\end{equation}%
which is the first inequality in $\left( \ref{25}\right) .$

To prove the second inequality in $\left( \ref{25}\right) $, we use the
right side of (\ref{26}) and using $(h-s)_{2}-$convexity of $f$ , we have%
\begin{eqnarray*}
&&h^{s}\left( \frac{1}{2}\right) \left[ f\left( ta+(1-t)b\right) +f\left(
tb+(1-t)a\right) \right] \\
&\leq &h^{s}\left( \frac{1}{2}\right) \left[ h^{s}\left( t\right) f\left(
a\right) +h^{s}\left( 1-t\right) f\left( b\right) +h^{s}\left( t\right)
f\left( b\right) +h^{s}\left( 1-t\right) f\left( a\right) \right] \\
&=&h^{s}\left( \frac{1}{2}\right) \left[ h^{s}\left( t\right) +h^{s}\left(
1-t\right) \right] \left[ f\left( a\right) +f\left( b\right) \right]
\end{eqnarray*}%
Integrating the both side of the above inequality,we have%
\begin{eqnarray}
&&h^{s}\left( \frac{1}{2}\right) \int_{0}^{1}\left[ f\left( ta+(1-t)b\right)
+f\left( tb+(1-t)a\right) \right] dt  \notag \\
&=&h^{s}\left( \frac{1}{2}\right) \frac{2}{b-a}\int_{a}^{b}f\left( x\right)
dx  \notag \\
&\leq &h^{s}\left( \frac{1}{2}\right) \left[ f(a)+f(b)\right] \int_{0}^{1}%
\left[ h^{s}\left( t\right) +h^{s}\left( 1-t\right) \right] dt.  \label{28}
\end{eqnarray}%
We obtain the inequality in (\ref{25}).
\end{proof}

\begin{remark}
In the inequality (\ref{25}); \textit{if we choose }$h(t)=t$\textit{, we
have the inequality}%
\begin{equation*}
2^{s-1}f\left( \frac{a+b}{2}\right) \leq \frac{1}{b-a}\int_{a}^{b}f(x)dx\leq 
\frac{f(a)+f(b)}{s+1}
\end{equation*}%
which is the inequality (\ref{109}).
\end{remark}

\begin{remark}
\textit{If we choose }$h(t)=t$ and $s=1$\textit{, we have the inequality}%
\begin{equation*}
f\left( \frac{a+b}{2}\right) \leq \frac{1}{b-a}\int_{a}^{b}f(x)dx\leq \frac{%
f(a)+f(b)}{2}
\end{equation*}%
which is the Hermite-Hadamard inequality.
\end{remark}

\begin{theorem}
\label{th3} Let $h:J\subset 
\mathbb{R}
\rightarrow 
\mathbb{R}
$ be a non-negative function, $h\neq 0.$ We say that $f,g:I=\left[ 0,\infty
\right) \rightarrow 
\mathbb{R}
$ are an $(h-s)_{2}-$convex function in the second sense$,$ if $f,g$ are
non-negative and for all $x,y\in \left[ 0,\infty \right) =I,$ $s\in \left(
0,1\right] ,$ $t\in \left[ 0,1\right] .$ If $fg\in L_{1}\left[ a,b\right] ,$ 
$h\in L_{1}\left[ 0,1\right] $, we have the following inequality;%
\begin{eqnarray}
\frac{1}{b-a}\int_{a}^{b}f(x)g(x)dx &\leq &f\left( a\right) g\left( a\right)
\int_{0}^{1}h^{2s}\left( t\right) dt+f\left( b\right) g\left( b\right)
\int_{0}^{1}h^{2s}\left( 1-t\right) dt  \notag \\
&&+\left[ f\left( a\right) g\left( b\right) +f\left( b\right) g\left(
a\right) \right] \int_{0}^{1}h^{s}\left( t\right) h^{s}\left( 1-t\right) dt.
\label{29}
\end{eqnarray}
\end{theorem}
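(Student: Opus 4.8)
The plan is to follow the now-standard strategy for product-type Hadamard inequalities (as in Pachpatte's Theorem, inequality~(\ref{108})): apply the defining convexity inequality to each factor, multiply, expand, and integrate. First I would instantiate the $(h-s)_{2}$-convexity of both $f$ and $g$ at the endpoints $x=a$, $y=b$, which by~(\ref{22}) gives, for every $t\in\left[0,1\right]$ and $s\in\left(0,1\right]$,
\begin{equation*}
f\left(ta+(1-t)b\right)\leq h^{s}(t)f(a)+h^{s}(1-t)f(b),
\end{equation*}
\begin{equation*}
g\left(ta+(1-t)b\right)\leq h^{s}(t)g(a)+h^{s}(1-t)g(b).
\end{equation*}

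The central step is to multiply these two inequalities. This is legitimate precisely because $f$, $g$, and $h$ are all assumed non-negative, so both sides of each inequality are non-negative and the product inequality preserves its direction. After multiplying, I would expand the right-hand side into four terms,
\begin{equation*}
h^{2s}(t)f(a)g(a)+h^{s}(t)h^{s}(1-t)f(a)g(b)+h^{s}(1-t)h^{s}(t)f(b)g(a)+h^{2s}(1-t)f(b)g(b),
\end{equation*}
and then group the two mixed cross-terms, both of which carry the common factor $h^{s}(t)h^{s}(1-t)$, to obtain a bound on $f\left(ta+(1-t)b\right)g\left(ta+(1-t)b\right)$ whose right-hand side matches the integrand structure of the target inequality~(\ref{29}).

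Finally I would integrate both sides with respect to $t$ over $\left[0,1\right]$. Since the coefficients $f(a)g(a)$, $f(b)g(b)$, and $f(a)g(b)+f(b)g(a)$ are constants, they pull outside the integrals, leaving exactly the three integral expressions $\int_{0}^{1}h^{2s}(t)\,dt$, $\int_{0}^{1}h^{2s}(1-t)\,dt$, and $\int_{0}^{1}h^{s}(t)h^{s}(1-t)\,dt$ appearing in~(\ref{29}); the hypothesis $h\in L_{1}\left[0,1\right]$ is what guarantees these are finite and the interchange is valid. For the left-hand side I would apply the change of variable $x=ta+(1-t)b$, $dx=(a-b)\,dt$, which (after flipping the limits) converts $\int_{0}^{1}f\left(ta+(1-t)b\right)g\left(ta+(1-t)b\right)\,dt$ into $\frac{1}{b-a}\int_{a}^{b}f(x)g(x)\,dx$, using $fg\in L_{1}\left[a,b\right]$.

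I do not anticipate a genuine obstacle here; the argument is elementary once the multiplication step is set up. The only points requiring care are (i) explicitly invoking non-negativity of $f$ and $g$ to justify multiplying the two inequalities in the correct direction, and (ii) correctly combining the two identical cross-terms so the coefficient of $\int_{0}^{1}h^{s}(t)h^{s}(1-t)\,dt$ is $f(a)g(b)+f(b)g(a)$ rather than twice that integral for a single mixed product. Beyond these bookkeeping matters the result follows directly.
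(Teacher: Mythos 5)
Your proposal is correct and follows essentially the same route as the paper: apply the $(h-s)_{2}$-convexity inequality~(\ref{22}) to $f$ and $g$ at $x=a$, $y=b$, multiply the two inequalities (valid by non-negativity of $f$ and $g$), expand into the four terms, and integrate over $\left[ 0,1\right] $ with the change of variable $x=ta+(1-t)b$ giving the left-hand side of~(\ref{29}). Nothing is missing; your explicit attention to the sign of $dx$ and to grouping the two cross-terms is slightly more careful bookkeeping than the paper itself records.
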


\begin{proof}
Since $f,g\in SX(h-s)_{2},$ we have%
\begin{eqnarray*}
f\left( ta+\left( 1-t\right) b\right) &\leq &h^{s}\left( t\right) f\left(
a\right) +h^{s}\left( 1-t\right) f\left( b\right) \\
g\left( ta+(1-t)b\right) &\leq &h^{s}\left( t\right) g\left( a\right)
+h^{s}\left( 1-t\right) g\left( b\right)
\end{eqnarray*}%
for all $s\in \left( 0,1\right] ,$ $t\in \left[ 0,1\right] .$ Since $f$ and $%
g$ are non-negative,%
\begin{eqnarray*}
&&f\left( ta+\left( 1-t\right) b\right) g\left( ta+(1-t)b\right) \\
&\leq &\left[ h^{s}\left( t\right) f\left( a\right) +h^{s}\left( 1-t\right)
f\left( b\right) \right] \left[ h^{s}\left( t\right) g\left( a\right)
+h^{s}\left( 1-t\right) g\left( b\right) \right] \\
&=&h^{2s}\left( t\right) f\left( a\right) g\left( a\right) +h^{s}\left(
t\right) h^{s}\left( 1-t\right) f\left( a\right) g\left( b\right) \\
&&+h^{s}\left( t\right) h^{s}\left( 1-t\right) f\left( b\right) g\left(
a\right) +h^{2s}\left( 1-t\right) f\left( b\right) g\left( b\right) .
\end{eqnarray*}%
Then if we integrate the both side of the above inequality with respect to $%
t $ on $\left[ 0,1\right] ,$ we have the inequality in (\ref{29}).
\end{proof}

In the next corollary we will also make use of the Beta function of Euler
type, which is for $x,y>0$ defined as%
\begin{equation*}
\beta \left( x,y\right) =\int_{0}^{1}t^{x-1}\left( 1-t\right) ^{y-1}dt=\frac{%
\Gamma \left( x\right) \Gamma \left( y\right) }{\Gamma \left( x+y\right) }.
\end{equation*}

\begin{corollary}
\bigskip In the inequality (\ref{29}), \textit{if we choose }$h(t)=t$ and $%
s\in \left( 0,1\right] $\textit{, we have}%
\begin{eqnarray*}
\frac{1}{b-a}\int_{a}^{b}\left( fg\right) (x)dx &\leq &f\left( a\right)
g\left( a\right) \int_{0}^{1}t^{2s}dt+f\left( b\right) g\left( b\right)
\int_{0}^{1}\left( 1-t\right) ^{2s}dt \\
&&+\left[ f\left( a\right) g\left( b\right) +f\left( b\right) g\left(
a\right) \right] \int_{0}^{1}t^{s}\left( 1-t\right) ^{s}dt \\
&=&\frac{M\left( a,b\right) }{2s+1}+N\left( a,b\right) \beta \left(
s+1,s+1\right) \\
&=&\frac{M\left( a,b\right) }{2s+1}+N\left( a,b\right) \frac{\Gamma
^{2}\left( s+1\right) }{\Gamma \left( 2s+2\right) }.
\end{eqnarray*}
\end{corollary}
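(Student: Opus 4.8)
The plan is to specialize inequality (\ref{29}) from Theorem \ref{th3} to the case $h(t)=t$ and then evaluate the three resulting elementary integrals. First I would substitute $h(t)=t$ directly into the right-hand side of (\ref{29}), which converts $h^{2s}(t)$ into $t^{2s}$, $h^{2s}(1-t)$ into $(1-t)^{2s}$, and the cross term $h^{s}(t)h^{s}(1-t)$ into $t^{s}(1-t)^{s}$. Since Theorem \ref{th3} already provides the bound for an arbitrary nonnegative $h$, this substitution requires no further hypothesis and immediately produces the first displayed line of the corollary.

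Next I would compute each integral. The first two are elementary power integrals: $\int_{0}^{1}t^{2s}\,dt=\frac{1}{2s+1}$, and by the substitution $u=1-t$ one obtains $\int_{0}^{1}(1-t)^{2s}\,dt=\frac{1}{2s+1}$ as well. Grouping the two coefficients under the common factor $\frac{1}{2s+1}$ and recalling the notation $M(a,b)=f(a)g(a)+f(b)g(b)$ gives the term $\frac{M(a,b)}{2s+1}$.

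For the cross term I would recognize $\int_{0}^{1}t^{s}(1-t)^{s}\,dt$ as a Beta integral. Writing $t^{s}=t^{(s+1)-1}$ and $(1-t)^{s}=(1-t)^{(s+1)-1}$ matches the definition $\beta(x,y)=\int_{0}^{1}t^{x-1}(1-t)^{y-1}\,dt$ with $x=y=s+1$, so the integral equals $\beta(s+1,s+1)$. Applying the stated identity $\beta(x,y)=\frac{\Gamma(x)\Gamma(y)}{\Gamma(x+y)}$ then yields $\frac{\Gamma^{2}(s+1)}{\Gamma(2s+2)}$. Combining this with the coefficient $f(a)g(b)+f(b)g(a)=N(a,b)$ completes the second and third displayed lines.

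The computation is entirely routine; the only points requiring care are verifying that $s\in(0,1]$ keeps $s+1>0$ so that both Gamma factors are well defined and the Beta integral converges, and correctly matching the shifted exponents $t^{s}$ and $(1-t)^{s}$ to the $x-1,\,y-1$ pattern in the Beta function. No genuine obstacle arises beyond this bookkeeping.
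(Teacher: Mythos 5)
Your proposal is correct and matches the paper's intended argument exactly: the paper states this corollary without a written proof, having introduced the Beta function $\beta(x,y)=\int_{0}^{1}t^{x-1}(1-t)^{y-1}\,dt=\frac{\Gamma(x)\Gamma(y)}{\Gamma(x+y)}$ immediately beforehand precisely so that the substitution $h(t)=t$ into (\ref{29}) and the evaluation $\int_{0}^{1}t^{2s}\,dt=\int_{0}^{1}(1-t)^{2s}\,dt=\frac{1}{2s+1}$, $\int_{0}^{1}t^{s}(1-t)^{s}\,dt=\beta(s+1,s+1)$ yield the stated result. Your additional care about $s+1>0$ ensuring convergence is a minor but harmless refinement of the same routine computation.
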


\begin{remark}
\bigskip In the inequality (\ref{29}), \textit{if we choose }$h(t)=t$ and $%
s=1$\textit{, we have}%
\begin{eqnarray*}
\frac{1}{b-a}\int_{a}^{b}\left( fg\right) (x)dx &\leq &f\left( a\right)
g\left( a\right) \int_{0}^{1}t^{2}dt+f\left( b\right) g\left( b\right)
\int_{0}^{1}\left( 1-t\right) ^{2}dt \\
&&+\left[ f\left( a\right) g\left( b\right) +f\left( b\right) g\left(
a\right) \right] \int_{0}^{1}t\left( 1-t\right) dt \\
&=&\frac{M\left( a,b\right) }{3}+\frac{N\left( a,b\right) }{6}
\end{eqnarray*}%
which is the inequality in (\ref{108}).
\end{remark}

$\bigskip $

\begin{theorem}
\label{th4} Let $h:J\subset 
\mathbb{R}
\rightarrow 
\mathbb{R}
$ be a non-negative function, $h\neq 0.$ We say that $f:I=\left[ 0,\infty
\right) \rightarrow 
\mathbb{R}
$ is an $(h-s)_{2}-$convex function in the second sense$,$ if $f$ is
non-negative and for all $x,y\in \left[ 0,\infty \right) =I,$ $s\in \left(
0,1\right] ,$ $t\in \left[ 0,1\right] .$ If $f\in L_{1}\left[ a,b\right] ,$ $%
h\in L_{1}\left[ 0,1\right] $, we have 
\begin{equation}
\frac{1}{b-a}\int_{a}^{b}f(x)dx\leq \left[ f\left( a\right) +f\left(
b\right) \right] \int_{0}^{1}h^{s}\left( t\right) dt.  \label{30}
\end{equation}
\end{theorem}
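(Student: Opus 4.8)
The plan is to obtain (\ref{30}) as an immediate consequence of Theorem \ref{th1}. The only new ingredient needed is the symmetry of the weight integral under the reflection $t\mapsto 1-t$. Performing the change of variable $u=1-t$ (so that $du=-dt$ and the limits swap) gives
\begin{equation*}
\int_{0}^{1}h^{s}\left( 1-t\right) dt=\int_{0}^{1}h^{s}\left( u\right) du=\int_{0}^{1}h^{s}\left( t\right) dt,
\end{equation*}
which holds whenever $h^{s}\in L_{1}\left[ 0,1\right] $; this is ensured by the hypotheses $h\in L_{1}\left[ 0,1\right] $ and $s\in \left( 0,1\right] $ together with the nonnegativity of $h$.

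First I would invoke inequality (\ref{23}) from Theorem \ref{th1}, namely $\frac{1}{b-a}\int_{a}^{b}f(x)dx\leq f(a)\int_{0}^{1}h^{s}\left( t\right) dt+f(b)\int_{0}^{1}h^{s}\left( 1-t\right) dt$. Substituting the reflection identity above into the second summand on the right, both terms acquire the common factor $\int_{0}^{1}h^{s}\left( t\right) dt$; factoring out $\left[ f(a)+f(b)\right] $ then yields precisely (\ref{30}). Alternatively, and more in keeping with the self-contained style of the preceding proofs, I would reprove the bound directly from the definition (\ref{22}): starting from $f(ta+(1-t)b)\leq h^{s}(t)f(a)+h^{s}(1-t)f(b)$, integrating over $t\in \left[ 0,1\right] $, and applying the substitution $ta+(1-t)b=x$ on the left exactly as in Theorem \ref{th1}, I reach the two-weight bound, after which the reflection identity collapses the two weights into one.

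There is essentially no genuine obstacle here; the single point deserving care is the integrability justification for the change of variable, i.e.\ confirming that $\int_{0}^{1}h^{s}\left( 1-t\right) dt$ and $\int_{0}^{1}h^{s}\left( t\right) dt$ are finite and equal. Since the reflection $t\mapsto 1-t$ is a measure-preserving affine bijection of $\left[ 0,1\right] $ and $h^{s}$ is a well-defined nonnegative integrable function under the stated hypotheses, this equality requires no assumption beyond those already in force, and the result follows at once.
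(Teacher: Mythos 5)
Your proof is correct, but it takes a different route from the paper. You derive (\ref{30}) as an immediate corollary of Theorem \ref{th1}: inequality (\ref{23}) already bounds the mean integral by $f(a)\int_{0}^{1}h^{s}(t)\,dt+f(b)\int_{0}^{1}h^{s}(1-t)\,dt$, and the reflection identity $\int_{0}^{1}h^{s}(1-t)\,dt=\int_{0}^{1}h^{s}(t)\,dt$ collapses the two weights into one, giving the factored form at once. The paper instead gives a self-contained symmetrization argument: it applies the definition (\ref{22}) twice, at $ta+(1-t)b$ and at the reflected point $tb+(1-t)a$, adds the two inequalities to produce the symmetric bound $\left[ f(a)+f(b)\right] \left[ h^{s}(t)+h^{s}(1-t)\right] $, integrates over $[0,1]$, and then invokes the same reflection identity (and the fact that both integrals $\int_{0}^{1}f(ta+(1-t)b)\,dt$ and $\int_{0}^{1}f(tb+(1-t)a)\,dt$ equal $\frac{1}{b-a}\int_{a}^{b}f(x)\,dx$) before dividing by $2$. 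The two arguments hinge on exactly the same key fact, the invariance of $\int_{0}^{1}h^{s}$ under $t\mapsto 1-t$, so they are mathematically equivalent; what your version buys is economy and the explicit observation that Theorem \ref{th4} is logically redundant once Theorem \ref{th1} is proved, while the paper's version buys independence from earlier results and matches the expository style of the surrounding proofs. Your side remark on integrability is also sound: for $h\geq 0$ and $s\in (0,1]$ one has $h^{s}\leq 1+h$ pointwise, so $h\in L_{1}\left[ 0,1\right] $ does guarantee $h^{s}\in L_{1}\left[ 0,1\right] $, a point the paper passes over silently.
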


\begin{proof}
\bigskip Since $f\in SX(h-s)_{2},$ by the definition $(h-s_{2})$ convexity
of $f$ , we can write%
\begin{eqnarray*}
f\left( ta+\left( 1-t\right) b\right) &\leq &h^{s}\left( t\right) f\left(
a\right) +h^{s}\left( 1-t\right) f\left( b\right) \\
f\left( tb+\left( 1-t\right) a\right) &\leq &h^{s}\left( t\right) f\left(
b\right) +h^{s}\left( 1-t\right) f\left( a\right)
\end{eqnarray*}%
for all $s\in \left( 0,1\right] ,$ $t\in \left[ 0,1\right] .$ If we add the
above inequalities, we write%
\begin{equation*}
f\left( ta+\left( 1-t\right) b\right) +f\left( tb+\left( 1-t\right) a\right)
\leq \left[ f\left( a\right) +f\left( b\right) \right] \left[ h^{s}\left(
t\right) +h^{s}\left( 1-t\right) \right]
\end{equation*}%
Integrating the both side of the above inequality with respect to $t$ on $%
\left[ 0,1\right] ,$ we have%
\begin{equation*}
\int_{0}^{1}f\left( ta+\left( 1-t\right) b\right) +f\left( tb+\left(
1-t\right) a\right) dt\leq \left[ f\left( a\right) +f\left( b\right) \right]
\int_{0}^{1}\left[ h^{s}\left( t\right) +h^{s}\left( 1-t\right) \right] dt
\end{equation*}%
By use of the changing of variable and taking into account the $%
\int_{0}^{1}h^{s}\left( t\right) dt=\int_{0}^{1}h^{s}\left( 1-t\right) dt$
for $s\in \left( 0,1\right] ,$ we get%
\begin{equation*}
\frac{2}{b-a}\int_{a}^{b}f(x)dx\leq 2\left[ f\left( a\right) +f\left(
b\right) \right] \int_{0}^{1}h^{s}\left( t\right) dt
\end{equation*}%
which completes the proof.
\end{proof}

\begin{remark}
If in (\ref{30}), we choose $h(t)=t,$ we have the right hand side of the
inequality (\ref{109}). Again, if in (\ref{30}), we choose $h(t)=t$ and $%
s=1, $ we have the right hand side of Hermite-Hadamard inequality. Again, if
in (\ref{30}), we choose $h(t)=1,$ we have the right hand side of
Hermite-Hadamard inequality for $P-$convex functions in (\ref{107}).
\end{remark}

\begin{theorem}
\label{th5} Let $h:J\subset 
\mathbb{R}
\rightarrow 
\mathbb{R}
$ be a non-negative function, $h\neq 0.$ We say that $f:I=\left[ 0,\infty
\right) \rightarrow 
\mathbb{R}
$ is an $(h-s)_{2}-$convex function in the second sense$,$ if $f$ is
non-negative and for all $x,y\in \left[ 0,\infty \right) =I,$ $s\in \left(
0,1\right] ,$ $t\in \left[ 0,1\right] .$ If $f\in L_{1}\left[ a,b\right] ,$ $%
h\in L_{1}\left[ 0,1\right] $, we have 
\begin{equation}
\frac{1}{b-a}\int_{a}^{b}f(x)dx\leq \left[ \frac{f\left( a\right) +f\left(
b\right) }{2}+f\left( \frac{a+b}{2}\right) \right] \int_{0}^{1}h^{s}\left(
t\right) dt.  \label{31}
\end{equation}
\end{theorem}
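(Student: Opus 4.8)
The plan is to read (\ref{31}) as a Bullen-type inequality and to obtain it by applying the elementary upper bound (\ref{30}) of Theorem \ref{th4} separately on the two halves $\left[ a,\frac{a+b}{2}\right] $ and $\left[ \frac{a+b}{2},b\right] $ of $\left[ a,b\right] $, and then summing. This is legitimate because $f$ is assumed $(h-s)_{2}$-convex on the whole of $I=\left[ 0,\infty \right) $; restricting the defining inequality (\ref{22}) to endpoints lying in a subinterval shows that $f$ is again $(h-s)_{2}$-convex there, so Theorem \ref{th4} applies verbatim to each half with the same $h$ and $s$, and in particular with the same constant $\int_{0}^{1}h^{s}(t)\,dt$.

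First I would apply (\ref{30}) on $\left[ a,\frac{a+b}{2}\right] $. Since this interval has length $\frac{a+b}{2}-a=\frac{b-a}{2}$, the averaging factor becomes $\frac{2}{b-a}$, giving
\[
\frac{2}{b-a}\int_{a}^{\frac{a+b}{2}}f(x)\,dx\leq \left[ f(a)+f\left( \frac{a+b}{2}\right) \right] \int_{0}^{1}h^{s}(t)\,dt.
\]
Applying (\ref{30}) in exactly the same way on $\left[ \frac{a+b}{2},b\right] $, whose length is again $\frac{b-a}{2}$, yields
\[
\frac{2}{b-a}\int_{\frac{a+b}{2}}^{b}f(x)\,dx\leq \left[ f\left( \frac{a+b}{2}\right) +f(b)\right] \int_{0}^{1}h^{s}(t)\,dt.
\]

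Next I would add these two inequalities and use additivity of the integral, $\int_{a}^{b}=\int_{a}^{(a+b)/2}+\int_{(a+b)/2}^{b}$, collecting the two resulting copies of $f\left( \frac{a+b}{2}\right) $. This produces
\[
\frac{2}{b-a}\int_{a}^{b}f(x)\,dx\leq \left[ f(a)+f(b)+2f\left( \frac{a+b}{2}\right) \right] \int_{0}^{1}h^{s}(t)\,dt,
\]
and dividing by $2$ gives precisely (\ref{31}). The argument is essentially mechanical, so I do not expect a genuine obstacle; the only step demanding care is the bookkeeping of the halved interval lengths, which turns each factor $\frac{1}{(b-a)/2}$ into $\frac{2}{b-a}$ and is exactly what yields the coefficient $\frac{1}{2}$ on $f(a)+f(b)$ alongside the coefficient $1$ on $f\left( \frac{a+b}{2}\right) $. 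The invariance $\int_{0}^{1}h^{s}(t)\,dt=\int_{0}^{1}h^{s}(1-t)\,dt$, already used to prove Theorem \ref{th4}, guarantees that the same constant $\int_{0}^{1}h^{s}(t)\,dt$ appears on both halves, so that no extra hypotheses on $h$ are needed.
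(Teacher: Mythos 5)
Your proposal is correct and takes essentially the same approach as the paper: both proofs split $\left[ a,b\right] $ at the midpoint and add a right-hand Hadamard-type estimate over each half, relying on the symmetry $\int_{0}^{1}h^{s}\left( t\right) dt=\int_{0}^{1}h^{s}\left( 1-t\right) dt$ to get a single constant. The only cosmetic difference is that you invoke Theorem \ref{th4} on each half-interval, while the paper re-derives the identical per-half bound by integrating the defining inequality (\ref{22}) applied to the pairs $\left( a,\frac{a+b}{2}\right) $ and $\left( \frac{a+b}{2},b\right) $ directly.
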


\begin{proof}
\bigskip By the $(h-s)_{2}-$convexity of $f$ , we have%
\begin{eqnarray*}
f\left( ta+\left( 1-t\right) \frac{a+b}{2}\right) &\leq &h^{s}\left(
t\right) f\left( a\right) +h^{s}\left( 1-t\right) f\left( \frac{a+b}{2}%
\right) \\
f\left( t\frac{a+b}{2}+\left( 1-t\right) b\right) &\leq &h^{s}\left(
t\right) f\left( \frac{a+b}{2}\right) +h^{s}\left( 1-t\right) f\left(
b\right)
\end{eqnarray*}%
If we integrate the both side of the above inequalities with respect to $t$
on $[0,1]$, and use of the changing of variable, we get%
\begin{equation*}
\frac{2}{b-a}\int_{a}^{\frac{a+b}{2}}f\left( x\right) dx\leq f\left(
a\right) \int_{0}^{1}h^{s}\left( t\right) dt+f\left( \frac{a+b}{2}\right)
\int_{0}^{1}h^{s}\left( 1-t\right) dt
\end{equation*}%
and%
\begin{equation*}
\frac{2}{b-a}\int_{\frac{a+b}{2}}^{b}f\left( x\right) dx\leq f\left( \frac{%
a+b}{2}\right) \int_{0}^{1}h^{s}\left( t\right)
dt+f(b)\int_{0}^{1}h^{s}\left( 1-t\right) dt.
\end{equation*}%
By adding the above inequalities and taking into account the $%
\int_{0}^{1}h^{s}\left( t\right) dt=\int_{0}^{1}h^{s}\left( 1-t\right) dt$
for $s\in \left( 0,1\right] ,$ we get%
\begin{equation*}
\frac{2}{b-a}\int_{a}^{b}f(x)dx\leq \left[ f\left( a\right) +f\left(
b\right) +2f\left( \frac{a+b}{2}\right) \right] \int_{0}^{1}h^{s}\left(
t\right) dt
\end{equation*}%
which completes the proof.
\end{proof}

\begin{corollary}
\bigskip If in (\ref{31}), we choose $h(t)=t,$ we have%
\begin{equation*}
\frac{1}{b-a}\int_{a}^{b}f(x)dx\leq \frac{1}{s+1}\left[ \frac{f\left(
a\right) +f\left( b\right) }{2}+f\left( \frac{a+b}{2}\right) \right] .
\end{equation*}%
\bigskip
\end{corollary}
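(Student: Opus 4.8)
The plan is to obtain the stated bound as an immediate specialization of Theorem~\ref{th5}. The inequality in (\ref{31}) holds for an arbitrary non-negative function $h$ with $h\in L_{1}\left[ 0,1\right] $, so the first step is simply to insert the particular choice $h(t)=t$. With this choice the right-hand side factor $\int_{0}^{1}h^{s}\left( t\right) dt$ appearing in (\ref{31}) becomes $\int_{0}^{1}t^{s}dt$, while the bracketed term $\left[ \frac{f\left( a\right) +f\left( b\right) }{2}+f\left( \frac{a+b}{2}\right) \right] $ is unaffected.

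The only computation required is the evaluation of this elementary integral. Since $s\in \left( 0,1\right] $, an antiderivative of $t^{s}$ is $\frac{t^{s+1}}{s+1}$, so that $\int_{0}^{1}t^{s}dt=\frac{1}{s+1}$. Substituting this value back into (\ref{31}) produces exactly the asserted inequality $\frac{1}{b-a}\int_{a}^{b}f(x)dx\leq \frac{1}{s+1}\left[ \frac{f\left( a\right) +f\left( b\right) }{2}+f\left( \frac{a+b}{2}\right) \right] $.

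I do not expect any genuine obstacle here: the statement is a direct corollary of Theorem~\ref{th5}, and the sole substantive step is the one-line integration of $t^{s}$. The only admissibility point worth checking is that the choice $h(t)=t$ is legitimate in Theorem~\ref{th5}, that is, that $h$ is non-negative on $\left[ 0,1\right] $ and lies in $L_{1}\left[ 0,1\right] $; both facts are evident, so the specialization goes through without difficulty.
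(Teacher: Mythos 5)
Your proof is correct and matches the paper's (implicit) argument exactly: the corollary is obtained by substituting $h(t)=t$ into (\ref{31}) and evaluating $\int_{0}^{1}t^{s}dt=\frac{1}{s+1}$. Nothing more is needed.
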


\begin{remark}
If in (\ref{31}), we choose $h(t)=t$ and $s=1,$ we have%
\begin{equation*}
\frac{1}{b-a}\int_{a}^{b}f(x)dx\leq \frac{1}{2}\left[ \frac{f\left( a\right)
+f\left( b\right) }{2}+f\left( \frac{a+b}{2}\right) \right]
\end{equation*}%
which is the inequality (\ref{z}).
\end{remark}

\section{\protect\LARGE Applications to Some Special Means}

We now consider the applications of our Theorems to the following special
means

b) The arithmetic mean:%
\begin{equation*}
A=A\left( a,b\right) :=\frac{a+b}{2},\text{\ \ }a,b\geq 0,
\end{equation*}

c) The geometric mean: 
\begin{equation*}
G=G\left( a,b\right) :=\sqrt{ab},\text{ \ }a,b\geq 0,
\end{equation*}

d) The harmonic mean:%
\begin{equation*}
H=H\left( a,b\right) :=\frac{2ab}{a+b},\text{ \ }a,b\geq 0,
\end{equation*}

e) The quadratic mean:%
\begin{equation*}
K=K\left( a,b\right) :=\sqrt{\frac{a^{2}+b^{2}}{2}}\text{ \ }a,b\geq 0,
\end{equation*}

f) The logarithmic mean:

\begin{equation*}
L=L\left( a,b\right) :=\left\{ 
\begin{array}{l}
a\text{ \ \ \ \ \ \ \ \ \ \ \ \ \ if \ \ }a=b \\ 
\frac{b-a}{\ln b-\ln a}\text{ \ \ \ \ \ if \ \ }a\neq b%
\end{array}%
\right. ,\text{ \ }a,b\geq 0,
\end{equation*}

g) The Identric mean.

\begin{equation*}
I=I\left( a,b\right) :=\left\{ 
\begin{array}{l}
a\text{ \ \ \ \ \ \ \ \ \ \ \ \ \ \ \ \ \ if \ \ }a=b \\ 
\frac{1}{e}\left( \frac{b^{b}}{a^{a}}\right) ^{\frac{1}{b-a}}\text{ \ \ \ \
\ if \ \ }a\neq b%
\end{array}%
\right. ,\text{ \ }a,b\geq 0,
\end{equation*}

h) The $p-$logarithmic mean:

\begin{equation*}
L_{p}=L_{p}\left( a,b\right) :=\left\{ 
\begin{array}{l}
\left[ \frac{b^{p+1}-a^{p+1}}{\left( p+1\right) \left( b-a\right) }\right]
^{1/p}\text{ \ \ \ \ \ if \ \ }a\neq b \\ 
a\text{ \ \ \ \ \ \ \ \ \ \ \ \ \ \ \ \ \ \ \ \ \ \ \ \ if \ \ }a=b%
\end{array}%
\right. ,\text{ \ }p\in 
\mathbb{R}
\backslash \left\{ -1,0\right\} ;\text{ \ }a,b>0.
\end{equation*}

\bigskip

\bigskip The following inequality is well known in the literature:%
\begin{equation*}
H\leq G\leq L\leq I\leq A\leq K
\end{equation*}

It is also known that $L_{p}$ is monotonically increasing over $p\in 
\mathbb{R}
,$ denoting $L_{0}=I$ and $L_{-1}=L.$

The following propositions holds:

\begin{proposition}
\bigskip Let $a,b\in \left( 2,\infty \right) $, $a<b.$ Then for all $s\in %
\left[ 0,1\right] ,$ we have%
\begin{equation}
\ln I\left( a,b\right) \leq \frac{2}{\left( s+1\right) }A\left( \ln a,\ln
b\right)  \tag{3.2}
\end{equation}
\end{proposition}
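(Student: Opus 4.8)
The plan is to read inequality~(\ref{23}) of Theorem~\ref{th1}, specialized to $h(t)=t$, as a statement about the identric mean by applying it to $f(x)=\ln x$. The Example preceding Theorem~\ref{th1} records that $x\mapsto\ln x$ belongs to the class $SX((h-s)_{2},I)$ on an interval inside $(2,\infty)$ with $h(t)=t$; granting this membership, for $a,b\in(2,\infty)$ with $a<b$ I may feed $f=\ln$ into the $h(t)=t$ form of~(\ref{23}) and then identify both sides with the quantities in the claim.

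First I would quote the Remark following~(\ref{23}): the choice $h(t)=t$ there gives
\begin{equation*}
\frac{1}{b-a}\int_a^b f(x)\,dx\leq f(a)\int_0^1 t^{s}\,dt+f(b)\int_0^1(1-t)^{s}\,dt=\frac{f(a)+f(b)}{s+1}.
\end{equation*}
With $f=\ln$ the right-hand side becomes $\frac{\ln a+\ln b}{s+1}=\frac{2}{s+1}\,A(\ln a,\ln b)$, using $\ln a+\ln b=2A(\ln a,\ln b)$; this already reproduces the right-hand side of the asserted inequality.

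Next I would evaluate the left-hand side in closed form. From the antiderivative $\int\ln x\,dx=x\ln x-x$ one gets $\int_a^b\ln x\,dx=b\ln b-a\ln a-(b-a)$, hence
\begin{equation*}
\frac{1}{b-a}\int_a^b\ln x\,dx=\frac{b\ln b-a\ln a}{b-a}-1=\ln\!\left[\frac{1}{e}\left(\frac{b^{b}}{a^{a}}\right)^{\frac{1}{b-a}}\right]=\ln I(a,b),
\end{equation*}
directly by the definition of the identric mean. Chaining the two displays yields $\ln I(a,b)\leq\frac{2}{s+1}A(\ln a,\ln b)$, which is exactly the proposition.

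The substantive step, and the one I expect to be the real obstacle, is justifying that $\ln$ genuinely lies in $SX((h-s)_{2},[a,b])$ for the values of $s$ and the subinterval $[a,b]\subset(2,\infty)$ in play, since the Example asserts this only on $[2,4]$. Verifying the defining inequality $\ln(tu+(1-t)v)\leq t^{s}\ln u+(1-t)^{s}\ln v$ is delicate: for small $s$ one can lean on $t^{s},(1-t)^{s}\to 1$ on the interior of $[0,1]$ together with the crude bound $tu+(1-t)v\leq\max(u,v)\leq\frac{1}{2}uv<uv$ that holds because $\min(u,v)\geq 2$, but the endpoint behaviour near $t=0,1$ and the competition with the concavity of $\ln$ as $s$ grows mean that one must pin down precisely which $s\in(0,1]$ (and which subinterval of $(2,\infty)$) make the membership valid. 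Establishing that range is where the genuine care is needed; once it is secured, the remainder is the routine integral evaluation displayed above.
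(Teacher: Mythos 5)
Your proposal is correct and follows essentially the same route as the paper: the paper's proof of this proposition is precisely the one-line application of Theorem \ref{th1} (inequality (\ref{23})) with $f(x)=\ln x$ and $h(t)=t$, whose two sides you identify explicitly as $\ln I(a,b)$ and $\frac{2}{s+1}A(\ln a,\ln b)$. The membership question you flag --- whether $\ln$ genuinely lies in $SX\left(\left(h-s\right)_{2},\left[a,b\right]\right)$ for all $a,b\in(2,\infty)$ and all admissible $s$ --- is not addressed by the paper either (its proof simply says the result is ``obvious'' from that theorem), so your scruple marks a gap in the paper itself rather than a deviation from its argument.
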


\begin{proof}
The proof is obvious from Theorem 4 applied $f\left( x\right) =\ln x$ $,$ $%
h\left( t\right) =t,$ $x\in \left[ 2,\infty \right) $ and $s\in \left[ 0,1%
\right] .$
\end{proof}

\begin{proposition}
Let $a,b\in \left( 2,\infty \right) .$ Then for all $s\in \left[ 0,1\right]
, $ we have%
\begin{equation}
2^{s-1}\ln \left( A(a,b)\right) \leq \ln I\left( a,b\right) \leq \frac{1}{%
\left( s+1\right) }A\left( \ln a,\ln b\right)  \tag{3.1}
\end{equation}
\end{proposition}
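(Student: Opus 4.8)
The plan is to deduce both inequalities from a single application of Theorem \ref{th2} to $f(x)=\ln x$ with the weight $h(t)=t$, using as a bridge the classical identity
\begin{equation*}
\ln I(a,b)=\frac{1}{b-a}\int_{a}^{b}\ln x\,dx,
\end{equation*}
which follows from $\int_{a}^{b}\ln x\,dx=b\ln b-a\ln a-(b-a)$ together with the definition of $I(a,b)$. A prerequisite is that $\ln x$ belongs to $SX((h-s)_2,I)$ on $(2,\infty)$ for the admissible $s$; this is the content of the Example (stated there on $[2,4]$), and the same estimate $t^{s}\geq t$ on $[0,1]$ that makes it work there is what I would use to extend the membership to $(2,\infty)$. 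Once $h(t)=t$ is fixed, the weight integrals in Theorem \ref{th2} are elementary, $\int_0^1 t^{s}\,dt=\int_0^1(1-t)^{s}\,dt=\tfrac{1}{s+1}$, and $h^{s}(\tfrac12)=2^{-s}$.

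For the left inequality I would insert $h^{s}(\tfrac12)=2^{-s}$ into the prefactor $\frac{1}{2h^{s}(1/2)}=2^{s-1}$ and use $f\!\left(\frac{a+b}{2}\right)=\ln A(a,b)$. Together with the integral identity, the first inequality of Theorem \ref{th2} reads
\begin{equation*}
2^{s-1}\ln A(a,b)\leq \ln I(a,b),
\end{equation*}
which is the asserted lower bound verbatim. This half is routine and requires no further estimate.

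The right inequality is the delicate one. The second inequality of Theorem \ref{th2} yields
\begin{equation*}
\ln I(a,b)\leq \frac{f(a)+f(b)}{2}\int_0^1\!\left[t^{s}+(1-t)^{s}\right]dt=\frac{\ln a+\ln b}{2}\cdot\frac{2}{s+1}=\frac{2}{s+1}\,A(\ln a,\ln b),
\end{equation*}
and the same target emerges from Theorems \ref{th1} and \ref{th4}. The difficulty is the constant: the stated bound $\frac{1}{s+1}A(\ln a,\ln b)$ is exactly half of what this computation produces. I expect closing this factor of two to be the decisive obstacle, since every upper estimate available in the paper contributes the full weight $\int_0^1[t^{s}+(1-t)^{s}]\,dt=\frac{2}{s+1}$ rather than $\frac{1}{s+1}$. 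Reaching the sharper constant thus seems to call for an ingredient beyond Theorems \ref{th1}, \ref{th2} and \ref{th4} — a genuinely stronger upper bound for $\frac{1}{b-a}\int_a^b\ln x\,dx$ in terms of $A(\ln a,\ln b)$ on $(2,\infty)$ — and pinning that down, or verifying it is attainable for every $s\in[0,1]$, is the step on which the proof hinges.
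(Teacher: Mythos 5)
Your route coincides exactly with the paper's: its entire proof of this proposition is the single sentence that ``the assertion follows from Theorem 5 [Theorem \ref{th2}] applied for $f(x)=\ln x$ and $h(t)=t$,'' which is precisely the computation you carried out via $\ln I(a,b)=\frac{1}{b-a}\int_a^b\ln x\,dx$. Your left inequality matches the paper verbatim, and your factor-of-two diagnosis on the right is correct and is the end of the story: Theorem \ref{th2} yields $\frac{f(a)+f(b)}{2}\cdot\frac{2}{s+1}=\frac{2}{s+1}A(\ln a,\ln b)$, and the sharper constant $\frac{1}{s+1}$ is not attainable by \emph{any} argument, because the printed bound is false. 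Letting $b\to a^{+}$, the middle term tends to $\ln a$ while the claimed right side tends to $\frac{\ln a}{s+1}<\ln a$ for every $s\in(0,1]$ and $a>2$. The printed constant is an error in the paper, evidently from conflating $A(\ln a,\ln b)=\frac{\ln a+\ln b}{2}$ with $\ln a+\ln b=\ln G^{2}(a,b)$; what your computation actually proves, $\ln I(a,b)\leq\frac{2}{s+1}A(\ln a,\ln b)=\frac{1}{s+1}\ln G^{2}(a,b)$, is exactly the form recorded in the neighbouring propositions tagged (3.2) and (3.3). So the ``missing ingredient'' you hypothesize does not exist, and the paper's one-line proof does not deliver its own stated constant.

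One substantive flaw in your write-up is the bridge you propose for the hypothesis: extending the Example's $(h-s)_{2}$-convexity of $\ln$ from $[2,4]$ to $(2,\infty)$ via $t^{s}\geq t$. That estimate shows $t^{s}\ln x+(1-t)^{s}\ln y\geq t\ln x+(1-t)\ln y$, i.e.\ it bounds the majorant from below by the chord; but $\ln$ is concave, so $\ln(tx+(1-t)y)$ also lies above the chord, and nothing is decided. In fact membership fails on $(2,\infty)$: take $t=\frac{1}{2}$, fix $y$, and let $x\to\infty$; then $\ln\frac{x+y}{2}\sim\ln x$ while $2^{-s}(\ln x+\ln y)\sim 2^{-s}\ln x$ with $2^{-s}<1$ for $s>0$ (and at $s=1$ the condition is ordinary convexity, which $\ln$ fails on any interval). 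Consistently, even the corrected upper bound $\frac{2}{s+1}A(\ln a,\ln b)$ fails when $b/a$ is large. This defect is shared with the paper, which applies Theorem \ref{th2} to $\ln x$ on $(2,\infty)$ without verifying its premise, but your proposal should not present the extension of the Example as routine.
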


\begin{proof}
The assertion follows from Theorem 5 applied for $f\left( x\right) =\ln x,$ $%
x\in \left[ 2,\infty \right) $ and $h\left( t\right) =t.$
\end{proof}

\begin{proposition}
Let $a,b\in \left( 2,\infty \right) .$ Then for all $s\in \left[ 0,1\right]
, $ we have%
\begin{equation}
\ln I\left( a,b\right) \leq \frac{1}{\left( s+1\right) }\ln G^{2}\left(
a,b\right)  \tag{3.3}
\end{equation}
\end{proposition}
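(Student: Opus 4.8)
The plan is to apply Theorem \ref{th1} (i.e. inequality (\ref{23})) directly, with the choices $f(x)=\ln x$ and $h(t)=t$, in exactly the same manner as the first proposition of this section. The guiding observation is that the right-hand side of (3.3) is merely the bound already furnished by that theorem, rewritten through the elementary identity $\ln a+\ln b=\ln G^{2}(a,b)$.

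First I would instantiate (\ref{23}) with $f(x)=\ln x$ and $h(t)=t$, obtaining
\begin{equation*}
\frac{1}{b-a}\int_{a}^{b}\ln x\,dx\leq \ln a\int_{0}^{1}t^{s}\,dt+\ln b\int_{0}^{1}\left( 1-t\right) ^{s}\,dt.
\end{equation*}
Since $\int_{0}^{1}t^{s}\,dt=\int_{0}^{1}\left( 1-t\right) ^{s}\,dt=\frac{1}{s+1}$, the right-hand side collapses to $\frac{\ln a+\ln b}{s+1}$. Next I would evaluate the left-hand integral by elementary calculus: $\int_{a}^{b}\ln x\,dx=b\ln b-a\ln a-\left( b-a\right)$, whence
\begin{equation*}
\frac{1}{b-a}\int_{a}^{b}\ln x\,dx=\frac{b\ln b-a\ln a}{b-a}-1=\ln I\left( a,b\right)
\end{equation*}
directly from the definition $I(a,b)=\frac{1}{e}\left( b^{b}/a^{a}\right) ^{1/(b-a)}$.

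The final step is purely cosmetic: using $G(a,b)=\sqrt{ab}$ we have $\ln a+\ln b=\ln (ab)=\ln G^{2}(a,b)$, so the inequality reads $\ln I(a,b)\leq \frac{1}{s+1}\ln G^{2}(a,b)$, which is (3.3). I expect no genuine obstacle here, since the argument is a substitution into Theorem \ref{th1} together with one integration and a logarithm identity; indeed the bound coincides with that of the first proposition, merely re-expressed through $G$ rather than $A$. The single point requiring care is the standing hypothesis that $\ln x$ belongs to $SX((h-s)_{2},I)$ with $h(t)=t$ on $[a,b]\subset (2,\infty)$, the same $(h-s)_{2}$-convexity already invoked in the preceding propositions and illustrated in the Example.
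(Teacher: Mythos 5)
Your proof is correct and is in substance the same argument as the paper's: substitute $f(x)=\ln x$ and $h(t)=t$ into one of the paper's Hadamard-type upper bounds, identify $\frac{1}{b-a}\int_{a}^{b}\ln x\,dx=\ln I\left( a,b\right) $, and rewrite $\ln a+\ln b=\ln G^{2}\left( a,b\right) $. The only nominal difference is which theorem is cited: the paper's one-line proof invokes Theorem \ref{th4} (inequality (\ref{30})), whose bound is $\left[ f\left( a\right) +f\left( b\right) \right] \int_{0}^{1}h^{s}\left( t\right) dt$, whereas you substitute into Theorem \ref{th1} (inequality (\ref{23})), whose bound is $f\left( a\right) \int_{0}^{1}h^{s}\left( t\right) dt+f\left( b\right) \int_{0}^{1}h^{s}\left( 1-t\right) dt$. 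For $h(t)=t$ these two bounds coincide, since $\int_{0}^{1}t^{s}dt=\int_{0}^{1}\left( 1-t\right) ^{s}dt=\frac{1}{s+1}$, so both routes yield the identical inequality $\ln I\left( a,b\right) \leq \frac{\ln a+\ln b}{s+1}=\frac{1}{s+1}\ln G^{2}\left( a,b\right) $; nothing is gained or lost either way. If anything your write-up is more complete than the paper's, since you carry out the evaluation $\int_{a}^{b}\ln x\,dx=b\ln b-a\ln a-\left( b-a\right) $ and the identification with $\ln I\left( a,b\right) $ that the paper leaves implicit, and you explicitly flag the standing hypothesis --- assumed tacitly by the paper as well --- that $\ln x$ is $\left( h-s\right) _{2}$-convex with $h(t)=t$ on the relevant interval for the values of $s$ considered.
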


\begin{proof}
The prof is immediate follows from Theorem 7 applied for $f\left( x\right)
=\ln x,$ $x\in \left[ 2,\infty \right) $ and $h\left( t\right) =t.$
\end{proof}

\begin{proposition}
Let $a,b\in \left( 2,\infty \right) .$ Then for all $s\in \left[ 0,1\right]
, $ we have%
\begin{equation}
\ln I\left( a,b\right) \leq \frac{1}{\left( s+1\right) }\left[ \frac{\ln
G^{2}\left( a,b\right) }{2}+\ln \left( A(a,b)\right) \right]  \tag{3.3}
\end{equation}
\end{proposition}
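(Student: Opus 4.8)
The plan is to obtain (3.3) as the direct specialization of Theorem \ref{th5} (inequality~(\ref{31})) to the logarithm, exactly as in the proofs of the three preceding propositions. First I would set $f(x)=\ln x$ and $h(t)=t$ in (\ref{31}). With this choice the weight integral collapses, since $\int_{0}^{1}h^{s}(t)\,dt=\int_{0}^{1}t^{s}\,dt=\frac{1}{s+1}$. The left-hand side of (\ref{31}) becomes $\frac{1}{b-a}\int_{a}^{b}\ln x\,dx$, and the key observation is the classical integral representation of the identric mean,
\[
\frac{1}{b-a}\int_{a}^{b}\ln x\,dx=\frac{b\ln b-a\ln a}{b-a}-1=\ln I(a,b),
\]
which one reads off from $\int_{a}^{b}\ln x\,dx=b\ln b-a\ln a-(b-a)$.

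Next I would translate the bracketed boundary term of (\ref{31}) into the means listed in the section. With $f=\ln$ one has $\frac{f(a)+f(b)}{2}=\frac{\ln a+\ln b}{2}=\frac{1}{2}\ln(ab)=\frac{1}{2}\ln G^{2}(a,b)$, while $f\bigl(\frac{a+b}{2}\bigr)=\ln\bigl(\frac{a+b}{2}\bigr)=\ln A(a,b)$. Substituting these identifications together with the value $\frac{1}{s+1}$ of the weight integral turns the conclusion of Theorem \ref{th5} verbatim into
\[
\ln I(a,b)\leq \frac{1}{s+1}\Bigl[\frac{\ln G^{2}(a,b)}{2}+\ln A(a,b)\Bigr],
\]
which is precisely (3.3). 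Thus, apart from this mean-to-integral dictionary, no further computation is required.

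The only hypothesis that must be discharged before invoking Theorem \ref{th5} is that $f(x)=\ln x$ belongs to the class $SX((h-s)_{2},I)$ with $h(t)=t$ on the interval under consideration, and this is the one genuinely nontrivial point. Indeed $\ln$ is concave, hence not convex in the ordinary sense, so the admissibility rests entirely on the second-sense $s$-convexity slack: for $s\in(0,1]$ one has $t^{s}\geq t$ and $(1-t)^{s}\geq(1-t)$ on $[0,1]$, and on $(2,\infty)$ the boundary values $\ln u,\ln v$ are positive, so the extra weight gained in passing from $(t,1-t)$ to $(t^{s},(1-t)^{s})$ compensates the concavity defect. This is exactly the membership recorded in the Example, and the restriction $a,b\in(2,\infty)$ is imposed precisely to keep $\ln$ inside this class. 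Granting that membership, the argument is the single-line specialization displayed above, in parallel with Propositions (3.1)--(3.3).
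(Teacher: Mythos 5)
Your specialization of Theorem \ref{th5} is exactly the paper's proof: the paper, too, simply invokes that theorem (its ``Theorem 8'') with $f\left( x\right) =\ln x$ and $h\left( t\right) =t$, and your dictionary $\frac{1}{b-a}\int_{a}^{b}\ln x\,dx=\ln I\left( a,b\right) $, $\frac{f\left( a\right) +f\left( b\right) }{2}=\frac{1}{2}\ln G^{2}\left( a,b\right) $, $f\left( \frac{a+b}{2}\right) =\ln A\left( a,b\right) $, $\int_{0}^{1}t^{s}dt=\frac{1}{s+1}$ is the correct, fully written-out version of that one-line argument.

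The gap lies in the point you yourself single out as nontrivial: the claim that $\ln $ belongs to $SX(\left( h-s\right) _{2},I)$ with $h\left( t\right) =t$ on the relevant interval. Your ``slack'' argument is not a proof, and its conclusion is false. At $s=1$ there is no slack at all ($t^{s}=t$, $\left( 1-t\right) ^{s}=1-t$), the class condition is ordinary convexity, and $\ln $ is strictly concave; correspondingly the proposition itself fails at $s=1$: for $a=2$, $b=4$ it asserts $I\leq \sqrt{GA}$, but $I\left( 2,4\right) =8/e\approx 2.943$ while $\sqrt{G\left( 2,4\right) A\left( 2,4\right) }=\sqrt{6\sqrt{2}}\approx 2.913$. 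Moreover, for any fixed $s\in \left( 0,1\right] $ the membership cannot hold on all of $\left( 2,\infty \right) $: taking $t=\frac{1}{2}$, $u=2$ and $v\rightarrow \infty $, the required inequality $\ln \frac{u+v}{2}\leq 2^{-s}\ln \left( uv\right) $ fails, since the left side grows like $\ln v$ and the right side like $2^{-s}\ln v$. The paper's Example asserts membership only on $\left[ 2,4\right] $, and even there it can hold only for sufficiently small $s$, not for all $s\in \left( 0,1\right] $. So the hypothesis of Theorem \ref{th5} is never actually verified --- not by you and not by the paper, whose one-line proof has exactly the same hole; the difference is that you claim to close it with an argument that does not work, and in fact no argument can close it for the full stated range $s\in \left[ 0,1\right] $.
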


\begin{proof}
The prof is immediate follows from Theorem 8 applied for $f\left( x\right)
=\ln x,$ $x\in \left[ 2,\infty \right) $ and $h\left( t\right) =t.$
\end{proof}


\begin{thebibliography}{99}
\bibitem{SSD1} \label{dr1}Dragomir, S.S., Pe\v{c}ari\'{c}, J. and Persson,
L.E. \textit{Some inequalities of Hadamard type}, Soochow J.Math., 21,
335--341, 1995.

\bibitem{ssd6} \label{ssd6}Dragomir S.S. and Fitzpatrick S., The Hadamard's
inequality for $s-$convex functions in the second sense, Demonstration
Math., 32 (4) (1999), 687--696.

\bibitem{BG} \label{pachpatte}B.G. Pachpatte, \textit{On some inequalities
for convex functions},\ RGMIA Res. Rep. Coll., Vol. 6, Art. 1 (2003) 1-9.

\bibitem{WW} \label{bre1}Breckner, W.W. \textit{Stetigkeitsaussagen f%
\"{}%
ur eine Klasse verallgemeinerter\ konvexer funktionen in topologischen
linearen Raumen}, Pupl. Inst. Math., 23, 13--20, 1978.

\bibitem{WW2} \label{bre2}Breckner, W. W. \textit{Continuity of generalized
convex and generalized concave\ set-valued functions}, Rev Anal. Num%
\'{}%
er. Thkor. Approx., 22, 39--51, 1993.

\bibitem{GOD} \label{god}Godunova, E.K., Levin, V.I. \textit{Neravenstva
dlja funkcii sirokogo klassa, soderzascego vypuklye, monotonnye i nekotorye
drugie vidy funkii}, Vycislitel. Mat. i. Fiz. Mezvuzov. Sb. Nauc. Trudov,
MGPI,Moskva, pp. 138--142, 1985.

\bibitem{hud} \label{hud}Hudzik, H., Maligranda, L. \textit{Some remarks on }%
$s-$\textit{convex functions},\ Aequationes Math., 48, 100--111, 1994.

\bibitem{Mit2} \label{mit2}Mitrinovi\'{c}, D.S., Pe\v{c}ari\'{c}, J. and
Fink, A.M.\ \textit{Classical and new inequalities in analysis},
KluwerAcademic, Dordrecht, 1993.

\bibitem{V} \label{var}Varo\v{s}anec, S. \textit{On }$h-$\textit{convexity},
J. Math. Anal. Appl., Volume 326, Issue 1, 303--311, 2007.

\bibitem{bom} \label{bom}Bombardelli, M., Varo\v{s}anec, S. \textit{%
Properties of }$h-$C\textit{onvex functions related to the
Hermite--Hadamard--Fej\'{e}r inequalities}, Comput. Math. Appl. 58 (2009)
1869--1877.

\bibitem{bura} \label{bura}Burai, P., Hazy, A. \textit{On approximately }$h-$%
C\textit{onvex functions,} J. Convex Anal. 18 (2) (2011) 447--454.

\bibitem{sari} \label{sari}Sar\i kaya, M.Z., Sa\u{g}lam, A., Y\i ld\i r\i m,
H. \textit{On some Hadamard-type inequalities for }$h-$C\textit{onvex
functions,} J. Math. Inequal. 2 (3) (2008) 335--341.

\bibitem{oz} \label{oz}\"{O}zdemir, M.E., G\"{u}rb\"{u}z, M. and Akdemir,
A.O. \textit{Inequalities for }$h-$\textit{Convex Functions via Further
Properties}, RGMIA Research Report Collection Volume 14, article 22, 2011.

\bibitem{SEL} S.S. Dragomir, C.E.M. Pearce, \textit{Selected topics on
Hermite-Hadamard inequalities and applications}, RGMIA monographs, Victoria
University, 2000. [Online:\
http://www.staff.vu.edu.au/RGMIA/monographs/hermite-hadamard.html].
\end{thebibliography}
\end{document}